\documentclass[11pt]{amsart}

\usepackage{latexsym,rawfonts}
\usepackage{amsfonts,amssymb}
\usepackage{amsmath,amsthm}

\usepackage[plainpages=false]{hyperref}
\usepackage{graphicx}
\usepackage{color}
\usepackage[table]{xcolor}
\usepackage{longtable}

\textwidth      = 6.00in
\textheight     = 8.90in
\oddsidemargin  = 0.30in
\evensidemargin = 0.30in
\voffset        = -40pt

\pagestyle{plain}


\newcommand{\R}{\mathbb{R}}



\setlength{\parskip}{0.20\baselineskip plus 2pt}

\newcommand{\ko}{\mathcal K_o^n}
\newcommand{\koo}{\mathcal K^n}

\newcommand{\rnnn}{\mathbb R^{n}}

\newcommand{\sn}{ {\mathbb{S}^{n-1}}}

\newcommand{\psum}{{+_{\negthinspace\kern-2pt p}}\,}
\newcommand{\qsum}[1]{{+_{\negthinspace\kern-2pt #1}}\,}
\newcommand{\dpsum}{{\tilde+_{\negthinspace\kern-1pt p}}\,}
\newcommand{\dqsum}[1]{{\tilde+_{\negthinspace\kern-1pt #1}}\,}
\newcommand{\lsub}[1]{\hskip -1.5pt\lower.5ex\hbox{$_{#1}$}}

\numberwithin{equation}{section}

\newtheorem{theo}{Theorem}[section]

\newtheorem{lem}[theo]{Lemma}

\newtheorem{rem}[theo]{Remark}

\newtheorem{lemma}[theo]{Lemma}

\theoremstyle{definition}
\newtheorem{defi}[theo]{Definition}


\begin{document}

\title{The generalized Gaussian log-Minkowski problem}

\author[J. Hu]{Jinrong Hu}
\address{School of Mathematics, Hunan University, Changsha, 410082, Hunan Province, China}

\email{hujinrong@hnu.edu.cn}

\begin{abstract}
The generalized Gaussian distribution that stems from information theory is studied.  The log-Minkowski problem  associated with generalized Gaussian distribution shall be introduced and solved.
\end{abstract}
\keywords{Generalized Gaussian volume, log-Minkowski problem}

\subjclass[2010]{52A20, 52A40}

\thanks{The research is supported, in part, by the National Science Foundation of China (12171144, 12231006)}

\maketitle

\baselineskip18pt

\parskip3pt

\section{Introduction}
\label{Sec1}

Given a convex body $K$ in $\rnnn$, the Gaussian volume $\gamma_{n}(K)$ in the Gaussian probability space is defined by
\[
\gamma_{n}(K)=\frac{1}{(\sqrt{2\pi})^{n}}\int_{K}e^{-\frac{|x|^{2}}{2}}dx.
\]
Unlike Lebesgue measure, Gaussian volume is neither translation invariant nor homogeneous. Recently, Huang-Xi-Zhao \cite{HXYZ21} established the variational formula of $\gamma_{n}(\cdot)$ as
\[
\lim_{t\rightarrow 0}\frac{\gamma_{n}(K+tL)-\gamma_{n}(K)}{t}=\int_{\sn}h_{L}dS_{\gamma_{n}}(K,\cdot),
\]
where the Gaussian surface measure $S_{\gamma_{n}}(K,\cdot)$ is defined by
\begin{equation*}\label{tormes22}
S_{\gamma_{n}}(K,\eta)=\frac{1}{(\sqrt{2\pi})^{n}}\int_{\nu^{-1}_{K}(\eta)}e^{-\frac{|x|^{2}}{2}}{d}\mathcal{H}^{n-1}(x)
\end{equation*}
for every Borel subset $\eta\subset {\sn}$. Here $\mathcal{H}^{n-1}$ is the $(n-1)$-dimensional Hausdorff measure, $\nu_{K}$ is the Gauss map defined on the subset of those points of $\partial K$, and $\nu^{-1}_{K}$ is the inverse Gauss map (see Sec. \ref{Sec2} for a precise definition). Huang-Xi-Zhao \cite{HXYZ21} proposed the Gaussian Minkowski problem prescribing Gaussian surface measure.  At the same time, the authors  \cite{HXYZ21} obtained the normalized solution to the Gaussian Minkowski problem by applying the variational arguments \cite{HLYZ10,HLYZ16} and moreover derived the existence of weak solution to the non-normalized Gaussian Minkowski problem by using degree theory method. Subsequently, the development on the Gaussian Minkowski problem flourishes, see e.g., \cite{CHW23,FH23,IE23,KL23,Lu22,Sh23}.

As a generalization of standard Gaussian distribution, the generalized Gaussian distribution holds a central role concerning classical information measures. In information theory,  this distribution  appears naturally as an extremal distribution for various moment, entropy,  Fisher information and Cram\'{e}r-Rao inequality. As introduced by Lutwak-Lv-Yang-Zhang \cite{ELYZ12}, the generalized Gaussian volume is defined as follows:
\begin{defi}\label{definition}

For $m>0$ and $b<\frac{m}{n}$, the generalized Gaussian  volume of a convex body $K$ in $\rnnn$, $\gamma_{b,m}(K)$, is denoted by
 \[
 \gamma_{b,m}(K)=\int_{K}g_{b,m}(x)dx,
  \]
where the generalized Gaussian distribution (density function) $g_{b,m}$ is given by
\begin{equation*}
		g_{b,m}(x)=
		\begin{cases}
		q_{b,m}[1-\frac{b}{m}|x|^{m}]_+^{\frac{1}{b}-\frac{n}{m}-1},\qquad &{\rm if}\ b\neq 0,\\
			q_{0,m}e^{-\frac{1}{m}|x|^m},\qquad &{\rm if}\ b=0,
		\end{cases}
	\end{equation*}
where $t_{+}=\max(t,0)$ and
\begin{equation*}
		q_{b,m}=
		\begin{cases}
			\frac{\frac{m}{n}|\frac{b}{m}|^{\frac{n}{m}}\Gamma(\frac{n}{2}+1)}{\pi^{\frac{n}{2}}B(\frac{n}{m},1-\frac{1}{b})},\qquad &{\rm if}\  b<0,\\
			\frac{\Gamma(\frac{n}{2}+1)}{\pi^{\frac{n}{2}}m^{\frac{n}{m}}\Gamma(\frac{n}{m}+1)},\qquad &{\rm if}\ b=0,\\
			\frac{\frac{m}{n}(\frac{b}{m})^{\frac{n}{m}}\Gamma(\frac{n}{2}+1)}{\pi^{\frac{n}{2}}B(\frac{n}{m},\frac{1}{b}-\frac{n}{m})},\qquad &{\rm if}\ b>0,
		\end{cases}
	\end{equation*}
	where $\Gamma$ is the gamma function and $B$ is the Beta function.
Notice that $q_{b,m}$ is a normalization constant such that $g_{b,m}$ is a probability density. The above generalized Gaussian distribution includes the standard Gaussian distribution ($b=0,m=2$) and $t$-student distribution.
\end{defi}
Very recently, Liu-Tang \cite{JT24} established the variational formula of generalized Gaussian volume under $L_{p}$ sum with $p\neq 0$, which yields the generalized $L_{p}$ $(p\neq 0)$ Gaussian surface area measure as
\begin{equation}\label{JU}
	S_{p,b,m}(K,\eta)=\frac{1}{p}\int_{\nu^{-1}_{K}(\eta)}(x\cdot \nu_{K}(x))^{1-p}g_{b,m}(x)d\mathcal{H}^{n-1}(x)
	\end{equation}
for each Borel subset $\eta\subset \sn$. Here $S_{1,b,m}(K,\cdot)$ is abbreviated as $S_{b,m}(K,\cdot)$. Employing similar techniques in \cite{HXYZ21}, the authors \cite{JT24} obtained the solvability of the generalized $L_{p}$ Gaussian Minkowski problem for $p\geq 1$.

We are in a position to define the generalized Gaussian cone measure associated with the generalized Gaussian distribution.

\begin{defi} \label{def}
Let $K$ be a convex body in $\rnnn$. The generalized Gaussian cone measure of $K$, denoted by $G_{b,m}(K,\eta)$, is given by
\begin{equation}\label{JU}
		G_{b,m}(K,\eta)=\int_{\nu^{-1}_{K}(\eta)}x\cdot \nu_{K}(x)g_{b,m}(x)d\mathcal{H}^{n-1}(x)
	\end{equation}
for each Borel subset $\eta\subset \sn$.
\end{defi}
The Minkowski problem prescribing the generalized Gaussian cone measure is:

{\bf The generalized Gaussian log-Minkowski Problem.} Given a nonzero finite Borel measure $\mu$ on $\sn$, what are the necessary and sufficient conditions on $\mu$ so that there exists a convex body $K\subset \rnnn$ containing the origin such that
\[
G_{b,m}(K,\cdot)=\mu?
\]

The solvability of (normalized) generalized Gaussian log-Minkowski problem is obtained via a variational argument as follows.

\begin{theo}\label{mtheo}
Suppose $b< \frac{m}{n+m}$.  If $\mu$ is a nonzero finite Borel measure on $\sn$ that is not concentrated in any closed hemisphere, then there exists a convex body $K\in \ko$ with $\gamma_{b,m}(K)>1/2$
such that
\[
\frac{\mu}{|\mu|}= \frac{G_{b,m}(K,\cdot)}{G_{b,m}(K,\sn)}.
\]

\end{theo}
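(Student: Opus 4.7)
The plan is to solve the normalized problem ($|\mu|=1$, by rescaling) via a constrained variational argument in the spirit of \cite{HLYZ16,HXYZ21}. Fix any $c\in(1/2,1)$; since $r\mapsto\gamma_{b,m}(rB)$ is continuous and runs from $0$ to $1$, the level set
$$\mathcal{K}_c=\{K\in\ko:\gamma_{b,m}(K)=c\}$$
is nonempty. I will maximize
$$\Psi(K)=\int_{\sn}\log h_K(u)\,d\mu(u)$$
over $K\in\mathcal{K}_c$. If a maximizer $K\in\mathcal{K}_c\cap\ko$ exists, the Lagrange multiplier method together with a logarithmic variational formula will deliver $\mu=\lambda\, G_{b,m}(K,\cdot)$ for some $\lambda>0$, and taking total mass then yields $\mu/|\mu|=G_{b,m}(K,\cdot)/G_{b,m}(K,\sn)$ with $\gamma_{b,m}(K)=c>1/2$.

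The key preliminary ingredient is a logarithmic variational formula: for $K\in\ko$ and $f\in C(\sn)$, denoting by $K_t$ the Aleksandrov log-Wulff shape of $h_Ke^{tf}$,
\begin{equation}\label{logvar}
\frac{d}{dt}\bigg|_{t=0}\gamma_{b,m}(K_t)=\int_{\sn}f(u)h_K(u)\,dS_{b,m}(K,u)=\int_{\sn}f(u)\,dG_{b,m}(K,u).
\end{equation}
This is obtained by repeating the derivation of \cite{HXYZ21} and \cite{JT24} with $g_{b,m}$ replacing the standard Gaussian density and a logarithmic rather than $L_p$-Minkowski perturbation; the hypothesis $b<m/(n+m)$ ensures that for $b>0$ the exponent $\frac{1}{b}-\frac{n}{m}-1\geq 0$, so $g_{b,m}$ is bounded and continuous up to $\partial\{|x|\leq(m/b)^{1/m}\}$ (for $b\leq 0$ this regularity is automatic), which is what is needed to differentiate under the integral.

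The next step is existence of a maximizer. Take a maximizing sequence $\{K_j\}\subset\mathcal{K}_c$. For $b>0$ the $K_j$ all lie inside the fixed ball $\{|x|\leq(m/b)^{1/m}\}=\SUPP g_{b,m}$. For $b\leq 0$ the uniform bound must instead be extracted from the constraint $\gamma_{b,m}(K_j)=c<1$ together with the maximizing property $\Psi(K_j)\geq\Psi(r_cB)>-\infty$ (where $r_c$ is chosen so $r_cB\in\mathcal{K}_c$): any unbounded subsequence would have to be simultaneously wide enough to carry $\gamma_{b,m}$-mass $c$ and thin in some direction, making $h_{K_j}$ vanish on a set of positive $\mu$-mass (no-hemisphere-concentration) and sending $\Psi(K_j)\to-\infty$. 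Blaschke selection then yields a Hausdorff limit $K_\infty$; Hausdorff continuity of $\gamma_{b,m}$ keeps $K_\infty\in\mathcal{K}_c$, and the same no-concentration argument combined with $\Psi(K_\infty)>-\infty$ forces $K_\infty\in\ko$.

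Finally, for the Euler-Lagrange equation, given $f\in C(\sn)$ and the log-Wulff curve $K_t$ for $h_{K_\infty}e^{tf}$, \eqref{logvar} with $G_{b,m}(K_\infty,\sn)>0$ allows (via the implicit function theorem) an auxiliary rescaling $s(t)$ so that $e^{s(t)}K_t\in\mathcal{K}_c$ for small $t$; differentiating $\Psi$ along this curve at $t=0$ and invoking the maximality of $K_\infty$ gives $\int_\sn f\,d\mu=\lambda\int_\sn f\,dG_{b,m}(K_\infty,\cdot)$ with Lagrange multiplier $\lambda=1/G_{b,m}(K_\infty,\sn)>0$, completing the proof. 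The anticipated main obstacle is the $b\leq 0$ case of the uniform boundedness in the previous step, where a quantitative comparison between the polynomial/exponential tails of $g_{b,m}$ (under the hypothesis $b<m/(n+m)$) and the width of $K_j$ is required to close the argument.
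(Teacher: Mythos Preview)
Your overall variational framework---constrain $\gamma_{b,m}(K)=c\in(1/2,1)$, optimize $\int_{\sn}\log h_K\,d\mu$, and read off the Euler--Lagrange relation via the log-variational formula \eqref{logvar}---is exactly the paper's strategy. However, you have the direction of the optimization backwards, and this is fatal: the supremum of $\Psi$ over $\mathcal{K}_c$ is $+\infty$, so no maximizer exists.

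The reason is that the constraint $\gamma_{b,m}(K)=c>1/2$ does \emph{not} force $K$ to be thin in any direction. On the contrary, since any half-space has $\gamma_{b,m}$-measure at most $1/2$, one gets a uniform lower bound $h_K\ge c_0>0$ for every $K\in\mathcal{K}_c$ (this is the paper's Lemma~\ref{lemma lower bound}). Thus your sentence ``making $h_{K_j}$ vanish on a set of positive $\mu$-mass'' is simply false. Concretely, take a long slab-like body $K_R\subset\rnnn$ of width $\sim s$ in one direction and extent $R$ in the others, with $s$ tuned so that $\gamma_{b,m}(K_R)=c$; as $R\to\infty$ the width $s$ stays bounded away from $0$, while $h_{K_R}(v)\ge R\,|v\cdot v_0|$ on a spherical cap of positive $\mu$-measure (by the non-concentration hypothesis), so $\Psi(K_R)\to+\infty$. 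Your side claim that for $b>0$ the bodies must lie inside the support ball $\{|x|\le(m/b)^{1/m}\}$ is also incorrect: the part of $K$ outside that ball contributes nothing to $\gamma_{b,m}(K)$, so nothing prevents $K\in\mathcal{K}_c$ from being arbitrarily large.

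The correct move is to \emph{minimize} $\Phi_\mu(K)=\int_{\sn}\log h_K\,d\mu$ over $\mathcal{K}_c$. Then the uniform lower bound $h_{K_i}\ge c_0$ gives $\Phi_\mu(K_i)\ge |\mu|\log c_0$, so the infimum is finite; and an entropy-type estimate (the paper's Lemma~\ref{lemma upper bound}) shows $\Phi_\mu(K_i)\gtrsim C_{\mu}\log R_{K_i}-C'$, which, combined with the upper bound $\Phi_\mu(K_i)\le\Phi_\mu(r_cB)$ for a minimizing sequence, forces $R_{K_i}$ to be bounded. Blaschke selection and $h_{K_i}\ge c_0$ then give a minimizer $K_0\in\ko$, and your Euler--Lagrange step (which is fine as written) finishes the proof.
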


\begin{rem}
Notice that Theorem \ref{mtheo} is the normalized version of the solvability of the generalized Gaussian log-Minkowski problem. Due to the non-homogeneity of the generalized Gaussian cone measure, we can not directly obtain the solvability of (non-normalized) generalized Gaussian log-Minkowski problem by Theorem \ref{mtheo}. It is very interesting to explore the variational structure of solving the generalized Gaussian log-Minkowski problem. In addition, Theorem \ref{mtheo} is interpreted as the solvability of Gaussian log-Minkowski problem in the context of $b=0$ and $m=2$, which was previously studied by \cite{Sh23} by virtue of the subspace concentration condition in \cite{BLYZ12}.
\end{rem}

\section{Preliminaries}
\label{Sec2}

In this section, we provide fundamental facts about convex body. Some standard  references are recommended,  such as books of Gardner \cite{G06} and Schneider \cite{S14}.

Denote by ${\rnnn}$ the $n$-dimensional Euclidean space, by $o$ the origin of ${\rnnn}$. The unit ball centered at $o$ in $\rnnn$ is written by $B^{n}$, its boundary by ${\sn}$. We write $\omega_{n}$ for the $n$-dimensional volume of $B^{n}$.   For $x,y\in {\rnnn}$, $x\cdot y$ denotes the standard inner product.  For $x\in{\rnnn}$, denote by $|x|=\sqrt{x\cdot x}$ the Euclidean norm.  Denote by $C({\sn})$ the set of continuous functions on the unit sphere ${\sn}$, and by $C^{+}({\sn})$ the set of strictly positive functions in $C({\sn})$.  A compact convex set of ${\rnnn}$ with non-empty interior is called as a convex body. The set of all convex bodies in $\rnnn$ is denoted by $\koo$. The set of all convex bodies containing the origin in the interior is denoted by $\ko$.

 Given $x\in{\rnnn}$, the support function of a compact convex set $K$ is defined by
\[
h_{K}(x)=\max\{x\cdot y:y \in K\}.
\]
Suppose $K$ contains the origin in its interior, the radial function $\rho_{K}$ with respect to the origin is defined by
\[
\rho_{K}(x)=\max\{\lambda:\ \lambda x\in K\}, \ x\in \rnnn \backslash \{0\}.
\]
It is clear to see that $\rho_{K}(u)u\in \partial K$ for all $u\in \sn$.

For compact convex sets $K$ and $L$ in ${\rnnn}$, any real $a_{1},a_{2}\geq 0$, define the Minkowski sum of $a_{1}K+a_{2}L$ in ${\rnnn}$ by
\[
a_{1}K+a_{2}L=\{a_{1}x+a_{2}y:x\in K,\ y\in L\},
\]
and its support function is given by
\[
h_{{a_{1}K+a_{2}L}}(\cdot)=a_{1}h_{K}(\cdot)+a_{2}h_{L}(\cdot).
\]

The $L_{p}$ sum of $K,L\in \ko$ for $a,b>0$ and $p\neq 0$ is defined as (see \cite{F62})
\[
a\cdot K+_{p} b\cdot L= \bigcap_{v\in \sn}\left\{x\in \rnnn:x\cdot v\leq (ah_{K}(v)^{p}+bh_{L}(v)^{p})^{\frac{1}{p}}\right\}
\]
and the $L_{0}$ sum (log-Minkowski sum) of $K,L\in \ko$ for $a,b>0$ is defined as
\[
a\cdot K+_{0}b\cdot L=\bigcap_{v\in \sn}\left\{x\in \rnnn:x\cdot v\leq h_{K}(v)^{a}h_{L}(v)^{b} \right\}.
\]

Given a compact convex set $K$ in $\rnnn$, for $\mathcal{H}^{n-1}$ almost all $x\in \partial K$, the unit outer normal of $K$ at $x$ is unique. In such case, we use $\nu_{K}$ to denote the Gauss map  that takes $x\in \partial K$ to its unique unit outer normal.
We write $\nu^{-1}_{K}$ for the inverse Gauss map.

For those $u\in \sn$ such that $\nu_{K}$ is well-defined at $\rho_{K}(u)u\in \partial K$, we write $\alpha_{K}(u)$ for $\nu_{K}(\rho_{K}(u)u)$.

 The Hausdorff metric $\mathcal{D}(K,L)$ between two compact convex sets $K$ and $L$ in ${\rnnn}$, is expressed as
\[
\mathcal{D}(K,L)=\max\{|h_{K}(v)-h_{L}(v)|:v\in {\sn}\}.
\]
Let $K_{j}$ be a sequence of compact convex sets in ${\rnnn}$, for a compact convex set $K_{0}$ in ${\rnnn}$, if $\mathcal{D}(K_{j},K_{0})\rightarrow 0$, then $K_{j}$ converges to $K_{0}$.

For each $h\in C^{+}(\sn)$, the \emph{Wulff shape} $[h]$ generated by $h$,  is the convex body defined by
\[
[h]=\{x\in \rnnn: x \cdot v\leq h(v), \ {\rm for \ all} \ v\in \sn\}.
\]



\section{Generalized Gaussian cone measure and the associated optimization problem}
\label{Sec4}

We first get the variational formula of generalized Gaussian volume under Minkowski sum to yield the generalized Gaussian surface area measure. The following lemma is needed.
\begin{lem}\label{KON}
Let $K\in \ko$. Suppose that $f:\sn\rightarrow \R$ is a continuous function
and $\delta>0$. Let $h_{t}:\sn \rightarrow (0,\infty)$ be a continuous function defined for each $t\in (-\delta,\delta)$ by
\[
h_{t}=h_{K}+tf+o(t,\cdot), \ on \  \sn,
\]
where $o:(-\delta,\delta)\times \sn \rightarrow \R$ is such that $o(t,\cdot): \sn \rightarrow \R$ is continuous, for each $t$, and  $o(t,\cdot)/t\rightarrow 0$ uniformly on $\sn$ as $t\rightarrow 0$. Then,
\[
\lim_{t\rightarrow 0}\frac{\rho_{[h_{t}]}(u)-\rho_{K}(u)}{t}=\frac{f(\alpha_{K}(u))}{h_{K}(\alpha_{K}(u))}\rho_{K}(u)
\]
for almost all $u\in \sn$ with respect to spherical Lebesgue measure. Moreover, there exists $M>0$ such that
\[
|\rho_{[h_{t}]}(u)-\rho_{K}(u)|\leq M|t|,
\]
for all $u\in \sn$ and $t\in (-\delta,\delta)$.
\end{lem}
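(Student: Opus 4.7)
\textbf{Proof plan for Lemma \ref{KON}.}

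The starting point is the dual representation of the Wulff shape: since $h_t>0$ on $\sn$ for $|t|<\delta$ (possible by uniform convergence $h_t\to h_K$ and $h_K\geq r_K>0$), one has
\[
\rho_{[h_t]}(u)=\min_{v\in\sn,\,u\cdot v>0}\frac{h_t(v)}{u\cdot v},\qquad \rho_K(u)=\min_{v\in\sn,\,u\cdot v>0}\frac{h_K(v)}{u\cdot v},
\]
and for $u$ at which the outer normal is unique (i.e.\ almost every $u$), the minimum for $K$ is attained precisely at $\nu_0:=\alpha_K(u)$. The plan is to (i) obtain the uniform Lipschitz bound by comparing these two minima, and then (ii) sandwich $\rho_{[h_t]}(u)$ between an upper bound coming from $\nu_0$ and a lower bound coming from the minimiser $\nu_t$ of the $h_t$-problem, finally arguing that $\nu_t\to\nu_0$ for a.e.\ $u$.

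\textbf{Step 1 (Lipschitz bound).} Let $\nu_0$ be any minimiser for $\rho_K(u)$ and $\nu_t$ any minimiser for $\rho_{[h_t]}(u)$; both exist by compactness. Plugging $\nu_0$ into the formula for $\rho_{[h_t]}(u)$ and $\nu_t$ into the formula for $\rho_K(u)$ gives
\[
\frac{tf(\nu_t)+o(t)}{u\cdot\nu_t}\leq \rho_{[h_t]}(u)-\rho_K(u)\leq \frac{tf(\nu_0)+o(t)}{u\cdot\nu_0}.
\]
From $K\in\ko$ one has $r_K\leq h_K\leq R_K$ and $\rho_K\leq R_K$; hence $u\cdot\nu_0=h_K(\nu_0)/\rho_K(u)\geq r_K/R_K$. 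The uniform convergence $h_t\to h_K$ yields the analogous bound for $u\cdot\nu_t$ in terms of $\min h_t$ and $\max\rho_{[h_t]}$, valid uniformly for $|t|$ small and all $u\in\sn$. Combined with $\|f\|_\infty<\infty$ and the uniformity of the $o(t)$ term, this produces the desired Lipschitz estimate $|\rho_{[h_t]}(u)-\rho_K(u)|\leq M|t|$.

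\textbf{Step 2 (upper bound on the incremental quotient).} For any $u$ with $\alpha_K(u)$ defined, setting $\nu_0=\alpha_K(u)$ in the above chain and using $u\cdot\nu_0=h_K(\nu_0)/\rho_K(u)$ gives
\[
\rho_{[h_t]}(u)-\rho_K(u)\leq \frac{\bigl(tf(\alpha_K(u))+o(t)\bigr)\rho_K(u)}{h_K(\alpha_K(u))},
\]
so $\limsup_{t\to 0^+}$ (resp.\ $\liminf_{t\to 0^-}$) of the quotient is at most (resp.\ at least) $f(\alpha_K(u))\rho_K(u)/h_K(\alpha_K(u))$.

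\textbf{Step 3 (lower bound; the main point).} From the left inequality in Step 1 together with the uniform lower bound on $u\cdot\nu_t$, one gets
\[
\frac{\rho_{[h_t]}(u)-\rho_K(u)}{t}\geq \frac{f(\nu_t)}{u\cdot\nu_t}+\frac{o(1)}{u\cdot\nu_t}\qquad(t>0),
\]
so everything reduces to showing that $\nu_t\to\alpha_K(u)$ as $t\to 0$ for a.e.\ $u$. This is the step I expect to be the most delicate. The argument is: for any subsequence $\nu_{t_k}\to \nu^\ast$, passing to the limit in $\rho_{[h_{t_k}]}(u)=h_{t_k}(\nu_{t_k})/(u\cdot\nu_{t_k})$ and using $\rho_{[h_{t_k}]}(u)\to\rho_K(u)$ from Step 1 together with uniform convergence $h_{t_k}\to h_K$ yields $\rho_K(u)=h_K(\nu^\ast)/(u\cdot\nu^\ast)$. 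Thus $\nu^\ast$ realises the minimum in the formula for $\rho_K(u)$, which means $\nu^\ast$ is an outer unit normal of $K$ at $\rho_K(u)u$; for a.e.\ $u$ this normal is unique and equals $\alpha_K(u)$. Continuity of $f$, the bound on $u\cdot\nu_t$, and the formula $u\cdot\alpha_K(u)=h_K(\alpha_K(u))/\rho_K(u)$ then give the matching lower bound $f(\alpha_K(u))\rho_K(u)/h_K(\alpha_K(u))$. The analogous analysis for $t\to 0^-$ is identical with inequalities reversed, yielding the two-sided derivative stated in the lemma.
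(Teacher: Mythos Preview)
Your argument is correct and self-contained, but it takes a genuinely different route from the paper. The paper does not analyse the Wulff-shape minimum directly; instead it observes that the additive perturbation $h_t=h_K+tf+o(t,\cdot)$ can be rewritten as a logarithmic one,
\[
\log h_t=\log h_K+t\,\frac{f}{h_K}+o(t,\cdot),
\]
and then simply invokes Lemmas~2.8, 4.1 and 4.3 of Huang--Lutwak--Yang--Zhang (Acta Math.\ 2016), which give the derivative and the Lipschitz bound for $\log\rho_{[h_t]}$ under such logarithmic variations. The elementary inequality $|s-1|\le C\,|\log s|$ on a bounded range, applied to $s=\rho_{[h_t]}/\rho_K$, then converts the Lipschitz bound from $\log\rho$ to $\rho$ itself.

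In contrast, you work directly with the formula $\rho_{[h_t]}(u)=\min_{u\cdot v>0}h_t(v)/(u\cdot v)$, sandwich the increment between the values at the two minimisers $\nu_0$ and $\nu_t$, and show $\nu_t\to\alpha_K(u)$ for a.e.\ $u$ by a compactness argument. This buys you a proof that is independent of \cite{HLYZ16} and makes transparent \emph{why} the formula holds (the limiting minimiser is the outer normal). The paper's approach, on the other hand, is much shorter on the page because the real work is outsourced to a reference, and it illustrates the general principle that additive and logarithmic variations of $h$ are interchangeable up to replacing $f$ by $f/h_K$. Both approaches share the same minor looseness about the Lipschitz constant being valid on all of $(-\delta,\delta)$ rather than only for $|t|$ small; in the applications only the latter is needed.
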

\begin{proof}
The desired results follow from Lemma 2.8, Lemma 4.1, Lemma 4.3 in \cite{HLYZ16}, also the facts that
\[
\log h_{t}=\log h_{K}+t \frac{f}{h_{K}}+o(t,\cdot)
\]
and that
\[
|s-1|\leq M_{0}|\log s|, \ {\rm when} \ s\in (0,M_{0})
\]
for a positive constant $M_{0}$.
\end{proof}
The variational formula of the generalized Gaussian volume under Minkowski sum produces the generalized Gaussian surface area measure, which is shown as follows.
\begin{theo}\label{yt}
Suppose $b< \frac{m}{n+m}$. Let $K\in \ko$ and $f\in C(\sn)$. Then,
\begin{equation*}\label{SVA}
\lim_{t\rightarrow 0}\frac{\gamma_{b,m}([h_{t}])-\gamma_{b,m}(K)}{t}=\int_{\sn}fdS_{b,m}(K).
\end{equation*}
\end{theo}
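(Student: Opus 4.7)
The plan is to realize $\gamma_{b,m}([h_t])$ in polar coordinates, differentiate under the integral by combining the pointwise radial derivative from Lemma \ref{KON} with a dominated convergence argument, and then rewrite the resulting spherical integral as a boundary integral in order to identify it with $\int_{\sn} f\, dS_{b,m}(K)$.

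First, for all $|t|$ sufficiently small one has $[h_t] \in \ko$ by continuity and positivity of $h_K$, so writing both bodies in polar form gives
\[
\gamma_{b,m}([h_t]) - \gamma_{b,m}(K) = \int_{\sn}\int_{\rho_K(u)}^{\rho_{[h_t]}(u)} g_{b,m}(ru)\, r^{n-1}\, dr\, du.
\]
Dividing by $t$ and applying the fundamental theorem of calculus together with the first conclusion of Lemma \ref{KON} produces the pointwise a.e.\ limit
\[
\frac{1}{t}\int_{\rho_K(u)}^{\rho_{[h_t]}(u)} g_{b,m}(ru)\, r^{n-1}\, dr \longrightarrow g_{b,m}(\rho_K(u)u)\, \frac{\rho_K(u)^n\, f(\alpha_K(u))}{h_K(\alpha_K(u))}.
\]

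The main obstacle is passing this limit under the outer spherical integral. The Lipschitz bound $|\rho_{[h_t]}(u) - \rho_K(u)| \le M|t|$ from Lemma \ref{KON} controls the inner difference quotient by $M\sup_{r \in J} g_{b,m}(ru)\, r^{n-1}$ on a fixed compact interval $J$ around the values of $\rho_K$, uniformly in $u$. This is precisely where the hypothesis $b < m/(n+m)$ is used: for $b > 0$ it is equivalent to $1/b - n/m - 1 > 0$, which removes the singularity of $g_{b,m}$ at the boundary of its support and makes $g_{b,m}$ continuous and bounded on all of $\rnnn$ (for $b \le 0$ this is automatic). Hence the integrand is bounded on the compact set $\{ru : r \in J,\, u \in \sn\}$, and dominated convergence applies.

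Finally, the standard parametrization $x = \rho_K(u)u$ of $\partial K$ by $\sn$ transports spherical Lebesgue measure to surface measure via $d\mathcal{H}^{n-1}(x) = \rho_K(u)^n / h_K(\alpha_K(u))\, du$, so the limit of the difference quotient becomes
\[
\int_{\partial K} f(\nu_K(x))\, g_{b,m}(x)\, d\mathcal{H}^{n-1}(x),
\]
which by the defining formula for the generalized Gaussian surface area measure $S_{b,m}(K,\cdot) = S_{1,b,m}(K,\cdot)$ equals $\int_{\sn} f\, dS_{b,m}(K)$, as required.
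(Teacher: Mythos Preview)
Your proposal is correct and follows essentially the same route as the paper's proof: polar coordinates, the pointwise derivative and Lipschitz bound from Lemma \ref{KON}, dominated convergence justified by boundedness of $g_{b,m}$ on a fixed compact shell, and the standard change of variables to a boundary integral. The only cosmetic difference is that the paper splits into the cases $b=0$ and $b\neq 0$ while you handle $g_{b,m}$ uniformly; your explicit observation that $b<m/(n+m)$ forces $1/b-n/m-1>0$ when $b>0$ is exactly what underlies the paper's boundedness claim for $|\widetilde F'(\theta)|$.
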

\begin{proof}
Case I: If $b=0$. Using the polar coordinate, we obtain
\[
\gamma_{0,m}([h_{t}])=q_{0,m}\int_{\sn}\int^{\rho_{[h_{t}]}(u)}_{0}e^{-\frac{1}{m}r^{m}}r^{n-1}drdu.
\]
Since $K\in \ko$ and $f\in C(\sn)$, as $t$ closes to 0, there exists $M_{1}>0$ such that $[h_{t}]\subset M_{1}B^{n}$. Set $F(s)=\int^{s}_{0}e^{-\frac{1}{m}r^{m}}r^{n-1}dr$. By virtue of mean value theorem and Lemma \ref{KON}, we get
\[
|F(\rho_{[h_{t}]}(u))-F(\rho_{K}(u))|\leq |F^{'}(\theta)||\rho_{[h_{t}]}(u)-\rho_{K}(u)|\leq M |F^{'}(\theta)||t|,
\]
where $M$ is given by  Lemma \ref{KON}, and $\theta$ is between $\rho_{[h_{t}]}(u)$ and $\rho_{K}(u)$. Due to $[h_{t}]\subset M_{1}B^{n}$, we know that $\theta\in (0,M_{1})$. Thus, from the definition of $F$, we conclude that $|F^{'}(\theta)|$ is bounded from above by some constants that depend on $M_{1}$. Therefore, there exists $M_{2}>0$ such that
\[
|F(\rho_{[h_{t}]}(u))-F(\rho_{K}(u))|\leq M_{2}|t|.
\]
In conjunction with dominated convergence theorem, and by again Lemma \ref{KON}, we get
\begin{equation*}
\begin{split}
\label{Up3}
\lim_{t\rightarrow 0}\frac{\gamma_{0,m}([h_{t}])-\gamma_{0,m}(K)}{t}&=q_{0,m}\int_{\sn}f(\alpha_{K}(u))e^{-\frac{1}{m}\rho^{m}_{K}(u)}\frac{\rho_{K}(u)^{n}}{h_{K}(\alpha_{K}(u))}du\\
&=q_{0,m}\int_{\partial K}f(\nu_{K}(x))e^{-\frac{1}{m}|x|^{m}}d\mathcal{H}^{n-1}(x)\\
&=\int_{\sn}fdS_{0,m}(K).
\end{split}
\end{equation*}

Case II: If $b<0$ or $0<b< \frac{m}{n+m}$. Using again the polar coordinate, we also have
\begin{equation*}
\gamma_{b,m}([h_t])=q_{b,m}\int_{\sn}\int_{0}^{\rho_{[h_t]}(u)}(1-\frac{b}{m} r^m)_+^{\frac{1}{b}-\frac{n}{m}-1}r^{n-1}drdu.
\end{equation*}
Similarly, due to $K\in \ko$ and $f\in C(\sn)$, as $t$ closes to 0, there exists $M_{1}>0$ such that $[h_{t}]\subset M_{1}B^{n}$. Set $\widetilde{F}(s)=\int^{s}_{0}(1-\frac{b}{m}r^{m})^{\frac{1}{b}-\frac{n}{m}-1}_{+}r^{n-1}dr$. Applying mean value theorem and Lemma \ref{KON}, we get
\[
|\widetilde{F}(\rho_{[h_{t}]}(u))-\widetilde{F}(\rho_{K}(u))|\leq |\widetilde{F}^{'}(\theta)||\rho_{[h_{t}]}(u)-\rho_{K}(u)|\leq M |\widetilde{F}^{'}(\theta)||t|,
\]
where $M$ is given by Lemma \ref{KON}, and $\theta$ is between $\rho_{[h_{t}]}(u)$ and $\rho_{K}(u)$. Due to $[h_{t}]\subset M_{1}B^{n}$, we know that $\theta\in (0,M_{1})$. Thus, from the definition of $\widetilde{F}$, we conclude that $|\widetilde{F}^{'}(\theta)|$ is bounded from above by some constants that depend on $M_{1}$ for $b<0$ or $0<b< \frac{m}{n+m}$. Therefore, there exists $M_{2}>0$ such that
\[
|\widetilde{F}(\rho_{[h_{t}]}(u))-\widetilde{F}(\rho_{K}(u))|\leq M_{2}|t|.
\]
By the dominated convergence theorem, and by again Lemma \ref{KON}, we get
\begin{equation*}
\begin{split}
\label{Up3}
\lim_{t\rightarrow 0}\frac{\gamma_{b,m}([h_{t}])-\gamma_{b,m}(K)}{t}&=q_{b,m}\int_{\sn}f(\alpha_{K}(u))(1-\frac{b}{m}\rho^{m}_{K}(u))^{\frac{1}{b}-\frac{n}{m}-1}_{+}\frac{\rho_{K}(u)^{n}}{h_{K}(\alpha_{K}(u))}du\\
&=q_{b,m}\int_{\partial K}f(\nu_{K}(x))(1-\frac{b}{m}|x|^{m})^{\frac{1}{b}-\frac{n}{m}-1}_{+}d\mathcal{H}^{n-1}(x)\\
&=\int_{\sn}fdS_{b,m}(K).
\end{split}
\end{equation*}
\end{proof}

Based on Theorem \ref{yt}, we can get the variational formula of generalized Gaussian volume under log-Minkowski sum to produce the generalized Gaussian cone measure.

\subsection{Generalized Gaussian cone measure}
\begin{theo}\label{yt2}
Suppose $b< \frac{m}{n+m}$. Let $K\in \ko$. Suppose that $f:\sn \rightarrow \R$ is a continuous function and $\delta>0$. For each $t\in (-\delta,\delta)$, define
\[
\log h_{t}=\log h_{K}+tf+o(t,\cdot), \quad on \ \sn,
\]
where $o:(-\delta,\delta)\times \sn \rightarrow \R$ is such that $o(t,\cdot):\sn \rightarrow \R$ is continuous, for each $t$, and $o(t,\cdot)/t\rightarrow 0$ uniformly on $\sn$ as $t\rightarrow 0$.
Then,
\begin{equation}\label{iue}
\lim_{t\rightarrow 0}\frac{\gamma_{b,m}([h_{t}])-\gamma_{b,m}(K)}{t}=\int_{\sn}fdG_{b,m}(K).
\end{equation}
\end{theo}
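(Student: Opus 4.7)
The plan is to reduce the log-Minkowski variational formula to the Minkowski variational formula already established in Theorem \ref{yt}, and then translate the resulting integral from the surface area measure $S_{b,m}(K,\cdot)$ to the cone measure $G_{b,m}(K,\cdot)$ via the identity $x\cdot\nu_K(x)=h_K(\nu_K(x))$ valid $\mathcal{H}^{n-1}$-a.e.\ on $\partial K$.

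The first step is a pointwise conversion. From $\log h_t=\log h_K+tf+o(t,\cdot)$ I would write
\[
h_t=h_K\exp\!\bigl(tf+o(t,\cdot)\bigr)=h_K+t(h_Kf)+\tilde o(t,\cdot),
\]
where $\tilde o(t,\cdot):=h_K\bigl(e^{tf+o(t,\cdot)}-1-tf-o(t,\cdot)\bigr)+h_K\,o(t,\cdot)$. Since $K\in\ko$, $h_K$ is bounded above and below by positive constants on $\sn$, and $f\in C(\sn)$ is bounded, so $tf+o(t,\cdot)=O(t)$ uniformly on $\sn$. Together with $o(t,\cdot)/t\to0$ uniformly, the Taylor bound $|e^y-1-y|\le y^2e^{|y|}$ gives $\tilde o(t,\cdot)/t\to0$ uniformly on $\sn$, and $\tilde o(t,\cdot)$ is continuous for each fixed $t$. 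Thus the family $\{h_t\}$ satisfies the hypotheses of Lemma \ref{KON} (and of Theorem \ref{yt}) with perturbation direction $h_Kf\in C(\sn)$ in place of $f$.

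The second step applies Theorem \ref{yt} directly with the continuous function $h_Kf$, yielding
\[
\lim_{t\to 0}\frac{\gamma_{b,m}([h_t])-\gamma_{b,m}(K)}{t}=\int_{\sn}h_Kf\,dS_{b,m}(K).
\]
The third step rewrites this integral. Unfolding the definition of $S_{b,m}(K,\cdot)=S_{1,b,m}(K,\cdot)$ from \eqref{JU}, we have $dS_{b,m}(K)=g_{b,m}(x)\,d\mathcal{H}^{n-1}(x)$ pushed forward by $\nu_K$, so
\[
\int_{\sn}h_Kf\,dS_{b,m}(K)=\int_{\partial K}f(\nu_K(x))\,h_K(\nu_K(x))\,g_{b,m}(x)\,d\mathcal{H}^{n-1}(x).
\]
Because $x\cdot\nu_K(x)=h_K(\nu_K(x))$ for $\mathcal{H}^{n-1}$-a.e.\ $x\in\partial K$, the right side equals $\int_{\partial K}f(\nu_K(x))\bigl(x\cdot\nu_K(x)\bigr)g_{b,m}(x)\,d\mathcal{H}^{n-1}(x)$, which by Definition \ref{def} is precisely $\int_{\sn}f\,dG_{b,m}(K)$, establishing \eqref{iue}.

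The only delicate point is the first step: verifying that the logarithmic-to-linear conversion preserves the $o(t,\cdot)/t\to0$ structure uniformly. Since $K\in\ko$ forces $h_K\ge c>0$ on $\sn$, the exponential expansion has quadratic remainder of order $t^2$ uniformly in $u\in\sn$, which is what makes the reduction to Theorem \ref{yt} legitimate; once this is in hand, the remainder of the argument is a direct bookkeeping identification of the two measures.
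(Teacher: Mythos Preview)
Your proposal is correct and follows essentially the same approach as the paper: convert the logarithmic perturbation to the additive form $h_t=h_K+t(h_Kf)+o(t,\cdot)$, invoke Theorem~\ref{yt} with $h_Kf$ in place of $f$, and then identify $\int_{\sn}h_Kf\,dS_{b,m}(K)=\int_{\sn}f\,dG_{b,m}(K)$. The paper's own proof is a two-line sketch that states the first step and declares the conclusion immediate, whereas you have made explicit both the uniform $o(t)$ control on the remainder after exponentiation and the measure identification via $x\cdot\nu_K(x)=h_K(\nu_K(x))$.
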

\begin{proof}
It is clear to see
\[
h_{t}=h_{K}+tfh_{K}+o(t,\cdot),\quad {\rm on} \ \sn.
\]
Thus, \eqref{iue} follows immediately from Theorem \ref{yt}.
\end{proof}

Based on above variational formulas, we can construct the subsequent optimization problem.

\subsection{The optimization problem}
\
\newline
\indent
Given a nonzero finite Borel measure $\mu$ on $\sn$, we first define the function $\Phi_{\mu}:C^{+}(\sn)\rightarrow \R$ by
\begin{equation*}\label{}
\Phi_{\mu}(h)=\int_{\sn}\log h(v)d\mu(v).
\end{equation*}
 If the above function $\Phi_{\mu}$ is restricted to the support function of a convex body in $\ko$, then $\Phi_{\mu}$ can be expressed as a function on $\ko$,
 \[
 \Phi_{\mu}:\ko\rightarrow \R,
 \]
 which is given by
\begin{equation}\label{phi}
 \Phi_{\mu}(K):=\int_{\sn}\log h_{K}(v)d\mu(v).
\end{equation}
Clearly, $\Phi_{\mu}(K)=\Phi_{\mu}(h_{K})$.

 We are in a position to reveal the equivalence of two extreme problems associated with $\Phi_{\mu}(h)$ and $\Phi_{\mu}(K)$ as follows.

\begin{lem}\label{EMK}
Suppose $\mu$ is a nonzero finite Borel measure on $\sn$.  For $\kappa_0\in (3/4,1)$. If there exists $K_{0}\in \ko$ with $\gamma_{b,m}(K_{0})=\kappa_{0}$ such that
\begin{equation}\label{eq 1}
\Phi_{\mu}(K_{0})=\inf\{\Phi_{\mu}(K):\ \gamma_{b,m}(K)=\kappa_{0}\ {\rm and} \ K\in \ko\},
\end{equation}
then
\begin{equation}\label{eq 2}
\Phi_{\mu}(h_{K_{0}})=\inf\{\Phi_{\mu}(h):  \gamma_{b,m}([h])=\kappa_{0} \ {\rm and} \ h\in C^{+}(\sn)\}.
\end{equation}
\end{lem}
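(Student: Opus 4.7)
The plan is to reduce the functional inequality $\Phi_\mu(h_{K_0})\le \Phi_\mu(h)$ for arbitrary $h\in C^+(\sn)$ to the given inequality $\Phi_\mu(K_0)\le \Phi_\mu(K)$ on convex bodies by passing through the Wulff shape $[h]$. The key classical fact is that $h_{[h]}\le h$ pointwise on $\sn$, with equality precisely when $h$ is already a support function. Since $\mu$ is a nonnegative measure and $\log$ is monotone, this pointwise inequality converts directly into the inequality we need between the functionals.

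First I would check that whenever $h\in C^+(\sn)$ satisfies $\gamma_{b,m}([h])=\kappa_0$, the Wulff shape $K:=[h]$ belongs to $\ko$. Because $h$ is continuous and strictly positive on the compact set $\sn$, there exists $c>0$ with $h\ge c$; then $[h]\supset cB^n$, so $K$ contains the origin in its interior. Next, from the very definition of the Wulff shape,
\[
h_{[h]}(v)=\max\{x\cdot v:x\cdot u\le h(u)\ \forall u\in\sn\}\le h(v)\quad\text{for every }v\in\sn,
\]
and by construction $[h_{[h]}]=[h]$, so in particular $\gamma_{b,m}([h_{[h]}])=\gamma_{b,m}([h])=\kappa_0$.

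Now I would combine these facts. Since $\mu\ge 0$ and $\log$ is monotone,
\[
\Phi_\mu(K)=\Phi_\mu(h_K)=\int_{\sn}\log h_{[h]}\,d\mu\le\int_{\sn}\log h\,d\mu=\Phi_\mu(h).
\]
Because $K\in\ko$ and $\gamma_{b,m}(K)=\kappa_0$, the hypothesis \eqref{eq 1} gives $\Phi_\mu(K_0)\le \Phi_\mu(K)$. Chaining these,
\[
\Phi_\mu(h_{K_0})=\Phi_\mu(K_0)\le\Phi_\mu(K)\le\Phi_\mu(h).
\]
Finally, $h_{K_0}\in C^+(\sn)$ with $[h_{K_0}]=K_0$ and $\gamma_{b,m}([h_{K_0}])=\kappa_0$, so $h_{K_0}$ itself lies in the admissible class on the right-hand side of \eqref{eq 2} and achieves the infimum, yielding \eqref{eq 2}.

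There is no real obstacle here; the argument is essentially a one-line Wulff-shape reduction. The only point that deserves care is verifying $[h]\in\ko$ (rather than merely $[h]\in\koo$), which needs the strict positivity built into $C^+(\sn)$, and confirming that $\mu\ge 0$ is what allows the pointwise inequality $h_{[h]}\le h$ to be integrated against $\log$ to yield the functional inequality in the correct direction.
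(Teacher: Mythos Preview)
Your proposal is correct and follows essentially the same Wulff-shape reduction as the paper: both pass from $h$ to $[h]\in\ko$, use $h_{[h]}\le h$ together with $[h_{[h]}]=[h]$ to get $\Phi_\mu([h])\le\Phi_\mu(h)$, and then invoke the minimality of $K_0$ over $\ko$. If anything, you are slightly more explicit than the paper in justifying $[h]\in\ko$ from strict positivity and in noting that $h_{K_0}$ is itself admissible, but the argument is the same.
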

\begin{proof}
Let $h\in C^{+}(\sn)$, define the \emph{Wulff shape} as
\[
[h]=\bigcap_{v\in \sn}\{x\in \rnnn: x\cdot v\leq h(v)\},
\]
clearly, $[h]\in \ko$. Since $h_{[h]}\leq h$ and  $[h_{[h]}]=[h]$, by the definition of $\Phi_{\mu}$, we have
\[
\Phi_{\mu}(h_{[h]})=\Phi_{\mu}([h])\leq \Phi_{\mu}(h).
\]
Hence,  $\Phi_{\mu}(h_{K_{0}})\leq \Phi_{\mu}(h)$ for all $h\in C^{+}(\sn)$ with $\gamma_{b,m}([h])=\kappa_{0}$.  The proof is completed.
\end{proof}

The next lemma implies that a solution to above extremal problem \eqref{eq 1} is a solution to the non-symmetric (normalized) generalized Gaussian log-Minkowski problem.

\begin{lemma}\label{lemma euler lagrange}
	If the minimum of \eqref{eq 1} is attained at $K_0\in \ko$, then
	\begin{equation*}
		\frac{\mu}{|\mu|} = \frac{G_{b,m}(K_0, \cdot)}{G_{b,m}(K_0,\sn)}.
	\end{equation*}
\end{lemma}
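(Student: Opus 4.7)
The plan is to run a Lagrange-multiplier argument phrased in terms of Wulff shapes. By Lemma~\ref{EMK}, a minimizer $K_0$ of \eqref{eq 1} also satisfies $\Phi_{\mu}(h_{K_0})\le\Phi_{\mu}(h)$ for every $h\in C^{+}(\sn)$ with $\gamma_{b,m}([h])=\kappa_0$. For each test function $f\in C(\sn)$ I will build a one-parameter family of continuous positive perturbations $h_t$ of $h_{K_0}$ that remains on the constraint surface $\{h:\gamma_{b,m}([h])=\kappa_0\}$, then differentiate the resulting inequality at $t=0$.

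Concretely, for $f\in C(\sn)$ and small $t$, set
\[
\log h_t \;=\; \log h_{K_0} + tf + \phi(t),
\]
where $\phi(t)\in\R$ is a scalar (independent of $v\in\sn$) chosen so that $\gamma_{b,m}([h_t])=\kappa_0$ and $\phi(0)=0$. To produce $\phi$ I apply the implicit function theorem to $\Psi(t,\phi):=\gamma_{b,m}\big([h_{K_0}\,e^{tf+\phi}]\big)$. Theorem~\ref{yt2} specialized to the constant test function $1$ gives $\partial_\phi\Psi(0,0)=G_{b,m}(K_0,\sn)$, and this quantity is strictly positive: $K_0\in\ko$ forces $x\cdot\nu_{K_0}(x)>0$ on $\partial K_0$, and the hypothesis $\kappa_0<1=\gamma_{b,m}(\rnnn)$ together with the continuity of $g_{b,m}$ guarantees that a non-negligible portion of $\partial K_0$ meets $\{g_{b,m}>0\}$. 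Consequently $\phi$ is $C^1$ in a neighborhood of $0$, the remainder $\phi(t)-t\phi'(0)$ is $o(t)$, and the expansion $\log h_t=\log h_{K_0}+t(f+\phi'(0))+o(t,\cdot)$ satisfies the hypothesis of Theorem~\ref{yt2}. Applying that theorem to the full family yields
\[
0 \;=\; \frac{d}{dt}\bigg|_{t=0}\gamma_{b,m}([h_t]) \;=\; \int_{\sn}f\,dG_{b,m}(K_0,\cdot)+\phi'(0)\,G_{b,m}(K_0,\sn).
\]

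To finish, I plug $h_t$ into the minimization inequality from Lemma~\ref{EMK}: since $\gamma_{b,m}([h_t])=\kappa_0$, one has $\Phi_{\mu}(h_t)\ge\Phi_{\mu}(h_{K_0})$, which unfolds to $t\int_{\sn}f\,d\mu+\phi(t)\,|\mu|\ge 0$ for all small $t$. Dividing by $t$ and taking one-sided limits $t\to 0^{\pm}$ forces
\[
\int_{\sn}f\,d\mu+\phi'(0)\,|\mu|=0,
\]
and eliminating $\phi'(0)$ between the two displayed identities gives $\tfrac{1}{|\mu|}\int_{\sn}f\,d\mu=\tfrac{1}{G_{b,m}(K_0,\sn)}\int_{\sn}f\,dG_{b,m}(K_0,\cdot)$. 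Because $f\in C(\sn)$ was arbitrary, the identity $\mu/|\mu|=G_{b,m}(K_0,\cdot)/G_{b,m}(K_0,\sn)$ of normalized measures follows at once. The main technical obstacle is the existence and differentiability of the normalizing scalar $\phi(t)$; both reduce to the strict positivity $G_{b,m}(K_0,\sn)>0$ justified above, after which the rest is routine.
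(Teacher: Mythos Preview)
Your argument is correct and follows essentially the same route as the paper: both invoke Lemma~\ref{EMK}, perturb by $h_{K_0}e^{tf}$ with a scalar correction to stay on the constraint surface, obtain the correction via the implicit function theorem using $G_{b,m}(K_0,\sn)>0$, and then differentiate the functional at $t=0$. The only cosmetic difference is that the paper writes the correction multiplicatively as $\lambda(t)$ whereas you write it additively in the logarithm as $\phi(t)=\log\lambda(t)$; your justification of the positivity $G_{b,m}(K_0,\sn)>0$ is in fact more careful than the paper's.
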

\begin{proof}
 With the aid of Lemma \ref{EMK}, we know that
\begin{equation}\label{OP2}
\Phi_{\mu}(K_{0})=\inf\{\Phi_{\mu}(h):  \gamma_{b,m}([h])=\kappa_{0}\in (3/4,1) \ {\rm and} \ h\in C^{+}(\sn)\}.
\end{equation}

	Let $f\in C(\sn)$ be arbitrary and for sufficiently small $|t|<\delta$ with $\delta>0$, set
	\begin{equation*}
		h_t= h_{K_{0}}e^{tf}.
	\end{equation*}
	Consider the function $P: [0,\infty)\times (-\delta,\delta) \rightarrow [0,1]$ given by
	\begin{equation*}
		P(\lambda, t) = \gamma_{b,m}([\lambda h_t]).
	\end{equation*}
	Note that $P(1, 0)= \kappa_{0}$ and by using Theorem \ref{yt}, we derive
	\begin{equation*}
		\partial_\lambda P(1,0) = \int_{\sn} h_{K_0}(v)dS_{b,m}(K_0,v)=G_{b,m}(K_{0},\sn)>0.
	\end{equation*}
	Thus, by the \emph{Implicit Function Theorem},  for sufficiently small $|t|$, there exists $\lambda(t)\in [1/2, 3/2]$ such that  $\gamma_{b,m}([\lambda(t)h_t])=\kappa_{0}$, $\lambda(0)=1$, and $\lambda(t)$ is differentiable at $0$, moreover, by Theorem \ref{yt2}, there is
	\begin{equation}\label{mt2}
		\lambda'(0) = -\frac{\int_{\sn}f(v) dG_{b,m}(K_0,v)}{\int_{\sn}h_{K_0}(v)dS_{b,m}(K_0,v)}= -\frac{\int_{\sn}f(v) dG_{b,m}(K_0,v)}{G_{b,m}(K_0,\sn)} .
	\end{equation}
	Now, set $g_t = \lambda(t)h_t$. Then, $g_t\in C^{+}(\sn)$ and $\gamma_{b,m}([g_t])=\kappa_{0}$. Since $h_{K_0}$ is a minimizer to \eqref{OP2}, and by using \eqref{mt2}, we have
	\begin{equation*}
		\begin{aligned}
			0 &= \left.\frac{d}{dt}\right|_{t=0} \int_{\sn} \log g_td\mu \\
			&= \left.\frac{d}{dt}\right|_{t=0} \log \lambda(t)|\mu|+\left.\frac{d}{dt}\right|_{t=0} \int_{\sn} (\log h_{K_0}+tf)d\mu\\
			&= \lambda'(0)|\mu | + \int_{\sn} fd\mu \\
			&= -\frac{\int_{\sn}f(v) dG_{b,m}(K_0,v)}{G_{b,m}(K_0,\sn)}|\mu|+ \int_{\sn} fd\mu.
		\end{aligned}
	\end{equation*}	
	Since $f\in C(\sn)$ is arbitrary, we have
	\begin{equation*}
		\frac{\mu}{|\mu|} = \frac{G_{b,m}(K_0, \cdot)}{G_{b,m}(K_0,\sn)}.
	\end{equation*}
The proof is completed.
\end{proof}

\section{The entropy estimate}
\label{Sec6}

This section is dedicated to providing entropy estimation for $\Phi_{\mu}(K)$, which plays an important role in solving the generalized Gaussian log-Minkowski problem. We first do some preparations.

Given a convex body $K \in \ko$, we shall  use the notions $R_{K}$ and $r_{K}$  to respectively represent:
\[
R_{K}=\max\{|x|: x\in K\}, \quad r_{K}=\min\{|x|:x\in K\}.
\]

Now, we give the following result.
\begin{lem}\label{lemma upper bound}
	Let $K_i\in \ko$ and $\mu$ be a nonzero finite Borel measure not concentrated in any closed hemisphere. Let $R_{K_i}= h_{K_i}(v_i)$ for some $v_i\in \sn$. Assume $v_i\rightarrow v_0$ for $v_{0}\in \sn$. Then there exists $C_{\mu, v_0}\in (0,1)$ and $\widetilde{C}_{\mu, v_0}\in \mathbb{R}$, independent of $i$, such that
	\begin{equation}
		\frac{1}{|\mu|}\int_{\sn} \log h_{K_i}d\mu \geq \log r_{K_i} + C_{\mu, v_0}\log \frac{R_{K_i}}{r_{K_i}}+\widetilde{C}_{\mu, v_0}.
	\end{equation}
\end{lem}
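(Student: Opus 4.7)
The plan is to exploit the geometric fact that $K_i$ contains both the ball $r_{K_i}B^n$ and the point $R_{K_i}v_i$. The second inclusion holds because $h_{K_i}(v_i)=R_{K_i}$ produces some $x\in K_i$ with $R_{K_i}=x\cdot v_i\leq|x|\leq R_{K_i}$, forcing $|x|=R_{K_i}$ and then, by equality in Cauchy--Schwarz, $x=R_{K_i}v_i$. Together with $r_{K_i}B^n\subseteq K_i$ this gives the pointwise lower bound
\[
h_{K_i}(u)\ \geq\ \max\bigl\{r_{K_i},\,R_{K_i}(u\cdot v_i)_+\bigr\},\qquad u\in\sn.
\]

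Next I would localize around $v_0$. The non-concentration of $\mu$ in the closed hemisphere $\{u:u\cdot(-v_0)\geq 0\}$ forces $\mu(\{u:u\cdot v_0>0\})>0$, and the same hypothesis at $v_0$ itself forces $\mu(\{u:u\cdot v_0\leq 0\})>0$. So one can pick $\alpha_0\in(0,1/2)$ small enough that the cap $A_0:=\{u\in\sn:u\cdot v_0\geq 2\alpha_0\}$ has $C_{\mu,v_0}:=\mu(A_0)/|\mu|\in(0,1)$. Since $v_i\to v_0$, whenever $|v_i-v_0|\leq\alpha_0$ one has $A_0\subseteq A_i:=\{u:u\cdot v_i\geq\alpha_0\}$, and hence $\mu(A_i)\geq C_{\mu,v_0}|\mu|$. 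Splitting the integral along $A_i$ and $\sn\setminus A_i$ and plugging in the pointwise bound yields
\[
\int_{\sn}\log h_{K_i}\,d\mu\ \geq\ |\mu|\log r_{K_i}+\mu(A_i)\bigl(\log(R_{K_i}/r_{K_i})+\log\alpha_0\bigr)_+ .
\]

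Dividing by $|\mu|$, I would separate the regimes. When $R_{K_i}/r_{K_i}\geq\alpha_0^{-1}$ the bracketed quantity is nonnegative, so the lower bound $\mu(A_i)/|\mu|\geq C_{\mu,v_0}$ gives the claim with $\tilde{C}_{\mu,v_0}:=C_{\mu,v_0}\log\alpha_0$; when $R_{K_i}/r_{K_i}<\alpha_0^{-1}$ the left side is already at least $\log r_{K_i}$, which dominates $\log r_{K_i}+C_{\mu,v_0}\log(R_{K_i}/r_{K_i})+C_{\mu,v_0}\log\alpha_0$ since the appended term is nonpositive in that regime. The finitely many initial indices with $|v_i-v_0|>\alpha_0$ can be absorbed by making $\tilde{C}_{\mu,v_0}$ more negative, or the lemma is simply applied along the tail of the sequence. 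The main obstacle is the coordination between the choice of $\alpha_0$, the convergence $v_i\to v_0$, and the two regimes of $R_{K_i}/r_{K_i}$; the $(\cdot)_+$ truncation is the technical device that allows a single pair of constants to handle both elongated and nearly round bodies uniformly.
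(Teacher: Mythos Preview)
Your proof is correct and follows essentially the same approach as the paper: both use that $r_{K_i}B^n\subseteq K_i$ and $R_{K_i}v_i\in K_i$, choose a cap around $v_0$ of positive $\mu$-measure via the non-concentration hypothesis, and transfer this to a cap around $v_i$ by the convergence $v_i\to v_0$. The only cosmetic difference is that the paper first rescales to $Q_i=K_i/r_{K_i}$, so that $\log h_{Q_i}\ge 0$ everywhere and the estimate $\int_{\sn}\log h_{Q_i}\,d\mu\ge\int_{\Omega_{\alpha_0}}(\log R_{Q_i}+\log(\alpha_0/2))\,d\mu$ goes through in one line without the $(\cdot)_+$ truncation or the two-regime split; your case analysis and the paper's rescaling produce the same constants, and both arguments hold only for large $i$ (which you address explicitly and the paper leaves implicit).
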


\begin{proof}
	Consider $Q_i = K_i/r_{K_i}$. Then $h_{Q_i}\geq 1$ and $R_{Q_i}= R_{K_i}/r_{K_i} = h_{Q_i}(v_i)$. For $v_{0}\in \sn$ and $0<\alpha<1$, define
	\begin{equation*}
		\Omega_{\alpha} = \{v\in \sn: v\cdot v_0\geq \alpha\}.
	\end{equation*}
	Since $\mu$ is not concentrated in any closed hemisphere, there exists $\alpha_0>0$ (dependent on $\mu$ and $v_0$) such that $\mu(\Omega_{\alpha_0})>0$. Since $v_i\rightarrow v_0$, if $v\in \Omega_{\alpha_0}$, then for sufficiently large $i$, we have $v\cdot v_i\geq \alpha_0/2$. Then,  by the fact that $h_{Q_i}\geq 1$, we obtain
	\begin{equation*}
		\int_{\sn} \log h_{Q_i}d\mu \geq \int_{\Omega_{\alpha_0}}\log h_{Q_i}d\mu \geq  \int_{\Omega_{\alpha_0}}(\log R_{Q_i} + \log(\alpha_0/2))d\mu = (\log R_{Q_i} + \log(\alpha_0/2))\mu(\Omega_{\alpha_0}).
	\end{equation*}
	Hence,
	\begin{equation*}
		\frac{1}{|\mu|}\int_{\sn}\log h_{K_i}d\mu = \log r_{K_i} + \frac{1}{|\mu|} \int_{\sn} \log h_{Q_i}d\mu \geq \log r_{K_i}+\frac{\mu(\Omega_{\alpha_0})}{|\mu |} \log \frac{R_{K_i}}{r_{K_i}}+\log(\alpha_0/2) \frac{\mu(\Omega_{\alpha_0})}{|\mu |}.
	\end{equation*}
The proof is completed.
\end{proof}

\section{Solutions to the generalized Gaussian log-Minkowski problem}
\label{Sec7}
In this section,  we are devoted to proving  Theorem \ref{mtheo}. The key is to demonstrate that the minimizer of the optimization problem
\begin{equation*}\label{min}
\inf\{\Phi_{\mu}(K):\gamma_{b,m}(K)=\kappa_{0}\in (3/4, 1) \ {\rm and} \ K \in \ko\}
\end{equation*}
 exists, where $\Phi_{\mu}(\cdot)$ is defined by \eqref{phi}.  To do that, $C^{0}$ estimate is first required.

\begin{lem}\label{lemma lower bound}
	Let $K\in \ko$. If $\gamma_{b,m}(K)= \kappa_0\in (3/4, 1)$, then there exists $c_0>0$ (dependent of $\kappa_0)$ such that $h_{K}\geq c_0$.
\end{lem}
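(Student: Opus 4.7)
The plan is to exploit the rotational symmetry of the density $g_{b,m}$ together with the elementary geometric fact that $K$ is trapped in a half-space of ``depth'' $h_K(v)$ in each direction $v$.

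First, for any $v\in \sn$ the definition of the support function gives
\[
K\subset H_{v}:=\{x\in\rnnn:x\cdot v\leq h_K(v)\},
\]
so $\kappa_0=\gamma_{b,m}(K)\leq \gamma_{b,m}(H_{v})$. The goal is then to show that if $h_K(v)$ is too small, $\gamma_{b,m}(H_v)$ cannot be as large as $\kappa_0$.

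Second, I would introduce the one-variable function
\[
\Psi(t):=\gamma_{b,m}\bigl(\{x\in\rnnn:x\cdot e_n\leq t\}\bigr),\qquad t\in\R.
\]
Because $g_{b,m}(x)$ depends only on $|x|$, the value $\gamma_{b,m}(\{x\cdot v\leq t\})$ is independent of $v\in\sn$ and equals $\Psi(t)$. Three properties of $\Psi$ are needed, each a routine Fubini computation from the explicit form of $g_{b,m}$: (i) $\Psi(0)=1/2$, since $g_{b,m}$ is even; (ii) $\Psi$ is continuous and nondecreasing on $\R$, and strictly increasing on the interval where the marginal density $t\mapsto \int_{\{x\cdot e_n = t\}} g_{b,m}\, d\mathcal{H}^{n-1}$ is positive (namely $[0,(m/b)^{1/m})$ if $b>0$, and $[0,\infty)$ otherwise); (iii) $\Psi\to 1$ at the upper end of this interval.

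Third, combine the inequalities: for every $v\in\sn$,
\[
\Psi(h_K(v))\;=\;\gamma_{b,m}(H_v)\;\geq\;\kappa_0\;>\;3/4\;>\;1/2\;=\;\Psi(0).
\]
Since $\kappa_0<1$ and $\Psi$ is continuous and strictly increasing on the relevant interval, $c_0:=\Psi^{-1}(\kappa_0)$ exists and is a strictly positive constant depending only on $\kappa_0$ (and on $b,m,n$). Monotonicity of $\Psi$ then yields $h_K(v)\geq c_0$ for all $v\in\sn$, which is the desired conclusion.

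The argument has no real obstacle; the only point requiring mild care is the case $b>0$, where $g_{b,m}$ is supported on the ball $\{|x|\leq (m/b)^{1/m}\}$ and $\Psi$ reaches the value $1$ in finite time. Because $\kappa_0<1$, however, $\Psi^{-1}(\kappa_0)$ still lies in the strict-increase region, so the same $c_0$ works uniformly. In the cases $b=0$ and $b<0$ the density has full support and the verification of monotonicity and continuity of $\Psi$ is immediate.
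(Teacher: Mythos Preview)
Your proof is correct and uses the same core idea as the paper: compare $K$ with the half-space $\{x\cdot v\le h_K(v)\}$ and use that the (rotation-invariant) half-space measure equals $1/2$ at depth $0$. The paper argues by contradiction with a sequence $c_i\to 0^+$ and continuity of the half-space measure, whereas you give the direct quantitative version by introducing $\Psi$ and setting $c_0=\Psi^{-1}(\kappa_0)$; this buys an explicit constant at the cost of checking strict monotonicity of $\Psi$, but the substance is the same.
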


\begin{proof}
	We take a contradictory technique. If not, suppose that there exists a subsequence $K_{i}\in \ko$, $v_i\in \sn$ and $c_i\rightarrow 0^+$ as $i\rightarrow \infty$, such that
	\begin{equation*}
		K_{i}\subset \{x\in \rnnn: x\cdot v_i\leq c_i\}.
	\end{equation*}
	Hence,
	\begin{equation*}
		\gamma_{b,m}(K_{i}) \leq \gamma_{b,m}(\{x\in \rnnn: x\cdot v_i\leq c_i\})\rightarrow 1/2,
	\end{equation*}
	as $i\rightarrow \infty$. This contradicts to the given condition that $\gamma_{b,m}(K_{i})=\kappa_0\in (3/4,1)$. Hence, the proof is finished.
\end{proof}

Next, we will give the solvability of related optimization problem.

\begin{lem}\label{lemma existence of minimizer}
	If $\mu$ is a nonzero finite Borel measure on $\sn$ that is not concentrated in any closed hemisphere, then there exists $K_0\in \mathcal{K}_o^n$ that solves the optimization problem \eqref{eq 1}.
\end{lem}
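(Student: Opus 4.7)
The plan is the direct method from the calculus of variations. Let $\{K_i\}\subset\ko$ be a minimizing sequence with $\gamma_{b,m}(K_i)=\kappa_0$ and $\Phi_\mu(K_i)\to\inf$; the feasible set is nonempty because $r\mapsto\gamma_{b,m}(rB^n)$ is continuous, takes the value $0$ at $r=0$, and approaches $1$ as $r$ grows (reaching $1$ at $r=(m/b)^{1/m}$ when $b>0$), so it attains $\kappa_0\in(3/4,1)$ by the intermediate value theorem. Lemma \ref{lemma lower bound} further gives a uniform constant $c_0=c_0(\kappa_0)>0$ with $h_{K_i}\geq c_0$ for every $i$, which implies $\Phi_\mu(K_i)\geq|\mu|\log c_0>-\infty$, so the infimum is finite.

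The core task is uniform Hausdorff boundedness of $\{K_i\}$. The lower bound $r_{K_i}\geq c_0$ is immediate from Lemma \ref{lemma lower bound}. To bound $r_{K_i}$ from above, note that $r_{K_i}B^n\subseteq K_i$ implies $\gamma_{b,m}(r_{K_i}B^n)\leq\kappa_0<1$, and since $\gamma_{b,m}(rB^n)\to 1$ as $r$ exhausts the support of $g_{b,m}$, this forces $r_{K_i}\leq R^\ast$ for some constant $R^\ast$. Now suppose, for contradiction, that $R_{K_i}\to\infty$ along a subsequence; choose $v_i\in\sn$ with $h_{K_i}(v_i)=R_{K_i}$ and, passing to a further subsequence, assume $v_i\to v_0\in\sn$. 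Lemma \ref{lemma upper bound} then yields
\begin{equation*}
\frac{1}{|\mu|}\Phi_\mu(K_i)\geq\log r_{K_i}+C_{\mu,v_0}\log\frac{R_{K_i}}{r_{K_i}}+\widetilde{C}_{\mu,v_0}\geq\log c_0+C_{\mu,v_0}\log\frac{R_{K_i}}{R^\ast}+\widetilde{C}_{\mu,v_0},
\end{equation*}
which blows up since $C_{\mu,v_0}>0$, contradicting that $\{K_i\}$ is minimizing. Hence $R_{K_i}$ is uniformly bounded, say $R_{K_i}\leq R^{\ast\ast}$.

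With $c_0\leq r_{K_i}$ and $R_{K_i}\leq R^{\ast\ast}$, Blaschke's selection theorem extracts a subsequence $K_{i_j}\to K_0$ in Hausdorff distance for some convex body $K_0$. Since $h_{K_{i_j}}\to h_{K_0}$ uniformly on $\sn$ and $h_{K_0}\geq c_0$, we have $K_0\in\ko$. The dominated convergence theorem applied to $\chi_{K_{i_j}}g_{b,m}\to\chi_{K_0}g_{b,m}$ (which holds $dx$-a.e.\ since $\partial K_0$ has Lebesgue measure zero) gives $\gamma_{b,m}(K_0)=\kappa_0$, so $K_0$ is feasible. Likewise, the uniform bounds $c_0\leq h_{K_{i_j}}\leq R^{\ast\ast}$ together with uniform convergence $h_{K_{i_j}}\to h_{K_0}$ give $\Phi_\mu(K_{i_j})\to\Phi_\mu(K_0)$ by dominated convergence, so $K_0$ attains the infimum.

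The main obstacle is ruling out $R_{K_i}\to\infty$, which is the only place the entropy estimate of Lemma \ref{lemma upper bound} is essential; it relies crucially on the hypothesis that $\mu$ is not concentrated in any closed hemisphere, for otherwise $C_{\mu,v_0}$ could vanish and the direct method would collapse. The remaining ingredients (finiteness of the infimum, upper bound on the inradius, Blaschke compactness, and passage to the limit in $\gamma_{b,m}$ and $\Phi_\mu$) are routine once the diameter control is in hand.
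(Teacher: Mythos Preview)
Your proof is correct and follows essentially the same approach as the paper: a minimizing sequence, the inradius lower bound from Lemma~\ref{lemma lower bound}, an inradius upper bound from the constraint $\gamma_{b,m}(K_i)=\kappa_0<1$, then Lemma~\ref{lemma upper bound} to force $R_{K_i}$ bounded, followed by Blaschke selection and continuity. The only cosmetic differences are that you explicitly verify feasibility and finiteness of the infimum up front and spell out the dominated-convergence arguments for $\gamma_{b,m}$ and $\Phi_\mu$, whereas the paper bounds $\Phi_\mu(K_i)$ above directly via the test body $r_0B^n$ and invokes ``continuity of all functionals involved'' for the limit passage.
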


\begin{proof}
	Taking a minimizing sequence $K_i\in \ko$. By Lemma \ref{lemma lower bound}, we know that there exists $c_0>0$ (independent of $i$) such that $h_{K_i}\geq c_0$.
	
	Next, we first prove that $r_{K_i}$ is uniformly bounded from above. If not, then there exists a subsequence $K_{i_j}$ and $r_{K_{i_j}}\rightarrow \infty$ as $j\rightarrow \infty$ such that $K_{i_j}\supset r_{K_{i_j}} B^{n}$. This implies $\gamma_{b,m}(K_{i_j})\geq \gamma_{b,m}(r_{K_{i_j}}B^{n})\rightarrow 1$ as $j\rightarrow \infty $, which is a contradiction to the fact $\gamma_{b,m}(K_{i_j}) = \kappa_{0}\in(3/4,1)$. So $r_{K_{i}}\leq c_{1}$ for $c_{1}>0$.  Let  $v_{i}$ be the maximum point of $h_{K_{i}}$ for some $v_{i}\in \sn$, in view of the fact that sphere is compact,  we can choose a subsequence of $K_{i}$ (still denoted by $K_{i}$)  such that $v_{i}\rightarrow v_{0}$ for $v_{0}\in \sn$. Thus combining with Lemma \ref{lemma upper bound}, we have
	\begin{equation}
\begin{split}
\label{upper}
		\Phi_{\mu}(K_{i})=\frac{1}{|\mu|}\int_{\sn} \log h_{K_i}d\mu &\geq \log r_{K_i} + C_{\mu, v_0}\log \frac{R_{K_i}}{r_{K_i}}+\widetilde{C}_{\mu, v_0}\\
&\geq \log c_0+C_{\mu, v_0}(\log R_{K_i}-\log c_{1})+\widetilde{C}_{\mu, v_0},
\end{split}
\end{equation}
 where $C_{\mu,v_{0}}$ and $\widetilde{C}_{\mu,v_{0}}$ come from Lemma \ref{lemma upper bound}.   On the other hand, we can choose a suitable $r_{0}>0$ such that $\gamma_{b,m}(r_{0}B^{n})=\kappa_{0}$ with
\[
\Phi_{\mu}(r_{0} B^{n})=\log r_{0},
\]
this reveals that $\Phi_{\mu}(K_{i})$ is bounded from above by a constant. Together with \eqref{upper}, this implies that $R_{K_i}$ is bounded from above. Then, by means of Blaschke's selection theorem,  $K_{i}$ has a convergent subsequence, denoted again by $K_{i}$, whose limit is called as $K_{0}$. That $h_{K_i}\geq c_0$ now implies that $K_0\in \ko$. By the continuity of all functionals involved, we conclude that $K_0$ is a minimizer to \eqref{eq 1}. The proof of this lemma is completed.
\end{proof}

{\bf Proof of Theorem \ref{mtheo}. }  This theorem holds by combining Lemma \ref{lemma existence of minimizer} and  Lemma \ref{lemma euler lagrange}.

\section*{Acknowledgment}The author would like to thank professors Yong Huang, Yiming Zhao and the referee for their valuable comments on this work.

\end{document}